\theoremstyle{plain}
\newtheorem{proposition}{Proposition}
\numberwithin{equation}{section}
\def\d{{\rm d}}
\def\i{{\rm i}}
\def\CP{\mathbb{CP}}
\def\CM{\mathbb{CM}}
\def\PT{\mathbb{PT}}
\begin{document}

\title{Hyper-K\"ahler instantons, symmetries, and flat spaces}
\author{Bernardo Araneda\footnote{\texttt{bernardo.araneda@aei.mpg.de}}  \\
Max-Planck-Institut f\"ur Gravitationsphysik \\ 
(Albert-Einstein-Institut), Am M\"uhlenberg 1, \\
D-14476 Potsdam, Germany}

\date{\today}

\maketitle

\begin{abstract}
We find all hyper-K\"ahler 4-manifolds admitting conformal K\"ahler structures with respect to either orientation, and we show that these structures can be expressed as a combination of twistor elementary states (and possibly a self-dual dyon) in locally flat spaces. The complex structures of different flat pieces are  not compatible however, reflecting that the global geometry is not a linear superposition. For either orientation the space must be Gibbons-Hawking (thus excluding the Atiyah-Hitchin metric), and, if the orientations are opposite, it must also be toric and have an irreducible Killing tensor. We also show that the only hyper-K\"ahler 4-metric with a non-constant Killing-Yano tensor is the half-flat Taub-NUT instanton.
\end{abstract}

\section{Introduction}

The path integral for Euclidean Quantum Gravity is expected to be dominated by gravitational instantons \cite{Hawking76, GibbonsHawking2} (Ricci-flat, complete Riemannian 4-manifolds). Recent developments \cite{AA, BG, AADNS, BGL, Li} suggest that the existence of compatible, integrable complex structures in these spaces is one of their central features. Due to Ricci-flatness, this is equivalent to conformally K\"ahler, reducing to hyper-K\"ahler if the conformal factor is constant. A {\em conformal} K\"ahler structure introduces some features which are not present  in the purely K\"ahler case (e.g. isometries), or which require a different treatment (e.g. twistor descriptions). In this note we are interested in instantons that are both hyper-K\"ahler and conformally K\"ahler, where the two structures may have the same or opposite orientation. Our focus is on symmetries, twistor aspects and relations to flat space structures.

Tod and Ward showed in \cite{TodWard} that if an anti-self-dual vacuum space, i.e. one in which 
\begin{align}
R_{ab}=0, \qquad \tilde{\Psi}_{A'B'C'D'}=0 \label{HK0}
\end{align}
(where $R_{ab}$ is the Ricci tensor and $\tilde{\Psi}_{A'B'C'D'}$ the self-dual Weyl curvature spinor) admits a non-constant, valence-2 primed Killing spinor, then it is necessarily given, on positive-definite sections, by the Gibbons-Hawking ansatz. In view of subsequent developments \cite{Pontecorvo, DT}, we can restate this result as saying that a hyper-K\"ahler 4-manifold with an equally oriented conformal K\"ahler structure must be Gibbons-Hawking.

Requiring completeness of the Gibbons-Hawking space leads to a gravitational multi-instanton \cite{GibbonsHawking}. These metrics are obtained by appropriately combining flat 4-space metrics. Page showed \cite{Page79} that also the Green's functions for various fields can be obtained by a suitable combination of flat 4-space pieces. In this note we wish to show that the conformal K\"ahler structure is similarly given by suitably combining flat pieces, each corresponding to a twistor elementary state\footnote{A conformal K\"ahler structure has an appealing physical interpretation when formulated in twistor terms, which is why we are interested in these descriptions; see section \ref{Sec:KSTT}.} in the asymptotically locally Euclidean (ALE) case, and the same plus a self-dual dyon in the asymptotically locally flat (ALF) case. However, we will show that the complex structures of different flat pieces are not compatible, which we interpret as manifesting the fact that the global geometry is not a linear superposition of elementary states but the instantons ``interact''.

On the other hand, the other possibility is that the conformal K\"ahler orientation is opposite to the hyper-K\"ahler one. In this case, we will show that the metric must again be Gibbons-Hawking (and so, conformally K\"ahler on both sides), but with more symmetries: there are two commuting Killing vectors and an irreducible Killing tensor, although generally no non-constant Killing-Yano tensor. (In fact, we will see that the only hyper-K\"ahler metric with a non-constant Killing-Yano tensor is half-flat Taub-NUT.) Furthermore, by comparison to the literature \cite{PH, Woodhouse}, the geometry must be an ALE two-centred multi-instanton. In this case we find a single flat space compatible with the complex structure, and we show that the twistor description is indeed not a superposition of elementary states but rather the Pleba\'nski-Demia\'nski field \cite{PD}, consistently with \cite{PH}.

Since we see from the above that a hyper-K\"ahler 4-metric admitting a conformal K\"ahler structure w.r.t either orientation must be Gibbons-Hawking, it follows that 
the Atiyah-Hitchin metric \cite{AtiyahHitchin} is not conformally K\"ahler; in particular, it does not admit (non-constant) Killing spinors or conformal Killing-Yano tensors\footnote{See  \cite{GibbonsRuback1, GibbonsRuback2} for a detailed description of the hidden symmetries of Taub-NUT, where possible generalisations to Atiyah-Hitchin are one of the main motivations.}. 

Finally, part of our motivation comes from the Chen-Teo solution \cite{ChenTeo2}, which is a generalisation of Pleba\'nski-Demia\'nski, it is conformally K\"ahler \cite{AA}, and it has an ALE three-centred Gibbons-Hawking space as one of its hyper-K\"ahler limits; however, we were not able to find a global flat space compatible with the complex structure of three centres.

We will use the abstract index notation following Penrose and Rindler \cite{PR2}. Our main results are summarised by propositions throughout the text.

\section{Killing spinors and twistor theory}
\label{Sec:KSTT}

Let $(M,g_{ab})$ be a 4-dimensional (4d), orientable Riemannian manifold. We say that $(M,g_{ab})$ is K\"ahler if there is an integrable complex structure $J$ compatible with $g$ (i.e. $g(J\cdot,J\cdot)=g$) and $\d\kappa=0$, where $\kappa_{ab}:=g_{bc}J^{c}{}_{a}$ is the fundamental 2-form. The latter necessarily has definite duality under the Hodge star $*_{g}$ of $g_{ab}$ (regardless of the K\"ahler condition). A 2-form $\omega$ is self-dual (SD) if $*_{g}\omega=\omega$ and anti-self-dual (ASD) if $*_{g}\omega=-\omega$. For $\kappa$, we refer to this distinction as the orientation of $\kappa$ (or of $J$). The geometry is conformally K\"ahler if there is a non-zero scalar field $\Omega$ such that $\hat{g}_{ab}=\Omega^{2}g_{ab}$ is K\"ahler. The 2-form $\hat\kappa_{ab}=\Omega^2\kappa_{ab}$ is called symplectic form.

As shown in Lemma 2.1 of \cite{DT}, $(M,g_{ab})$ is conformally K\"ahler if and only if it admits a valence-2 real Killing spinor. Choosing SD orientation for concreteness, this means that there is a primed spinor field $K_{A'B'}=K_{(A'B')}$ with\footnote{We denote Euclidean spinor conjugation by $\dagger$.} $K_{A'B'}=K_{A'B'}^{\dagger}$ and 
\begin{align}
\nabla_{AA'}K_{B'C'} = 2\epsilon_{A'(B'}t_{C')A} \label{primedKS}
\end{align}
where $t_{AA'}=-\frac{1}{3}\nabla_{A}^{B'}K_{A'B'}\neq0$. The tensorial version of \eqref{primedKS} is the conformal Killing-Yano (CKY) equation, $\nabla_{a}Z_{bc} = \nabla_{[a}Z_{bc]} - 2g_{a[b}t_{c]}$, with $Z_{ab}=K_{A'B'}\epsilon_{AB}$. Given a solution to \eqref{primedKS}, the K\"ahler metric is $\hat{g}_{ab}=\Omega^{2}_{+}g_{ab}$, where the conformal factor $\Omega_{+}$ and the complex structure are
\begin{align}
 \Omega^{-2}_{+} = \frac{K_{A'B'}K^{A'B'}}{2}, \qquad 
 (J_{+})^{a}{}_{b} = \Omega_{+} \, K^{A'}_{B'}\delta^{A}_{B}.
 \label{CFSD}
\end{align}
The SD symplectic form is $\hat\kappa^{+}_{ab}=\hat{g}_{bc}(J_{+})^{c}{}_{a}=\Omega_{+}^{3}K_{A'B'}\epsilon_{AB}$. The vector field $t^{a}$ preserves (Lie-drags) both $\Omega_{+}$ and $\hat{\kappa}^{+}$, so we refer to it as the `Hamiltonian vector field'. However, $t^{a}$ is holomorphic (i.e. it preserves $J_{+}$) if and only if it is Killing, which can be shown to hold if and only if the Ricci tensor of $g_{ab}$ is invariant under $J_{+}$. 

For ASD orientation, all of the above has an analogue in terms of unprimed spinors:
\begin{align}
\nabla_{AA'}\sigma_{BC}=2\epsilon_{A(B}\xi_{C)A'}, \qquad 
 \Omega^{-2}_{-} = \frac{\sigma_{AB}\sigma^{AB}}{2}, \qquad 
 (J_{-})^{a}{}_{b} = \Omega_{-} \, \sigma^{A}_{B}\delta^{A'}_{B'}. 
 \label{unprimedKS}
\end{align}
The ASD symplectic form is $\hat\kappa^{-}_{ab}=\Omega_{-}^{3}\sigma_{AB}\epsilon_{A'B'}$.

A special case of CKY tensors are Killing-Yano (KY) tensors: 2-forms $Y_{ab}=Y_{[ab]}$ satisfying $\nabla_{(a}Y_{b)c}=0$, or equivalently $\nabla_{a}Y_{bc}=\nabla_{[a}Y_{bc]}$. We have the following:

\begin{proposition}\label{prop:KY}
A 4d geometry admits a non-constant KY tensor if and only if it is conformally K\"ahler with respect to both orientations and the corresponding Hamiltonian vector fields coincide. 
\end{proposition}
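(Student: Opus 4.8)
The natural approach is to pass to two-component spinors and use the four-dimensional splitting of a $2$-form into its self-dual and anti-self-dual parts. I would write the candidate KY tensor as
\begin{align}
Y_{ab}=\sigma_{AB}\epsilon_{A'B'}+K_{A'B'}\epsilon_{AB}, \qquad \sigma_{AB}=\sigma_{(AB)},\quad K_{A'B'}=K_{(A'B')},
\end{align}
so that the ASD part of $Y$ is carried by the unprimed spinor $\sigma_{AB}$ and the SD part by the primed spinor $K_{A'B'}$ (both real under $\dagger$). The whole problem then reduces to analysing the irreducible decomposition of the single object $\nabla_{AA'}Y_{bc}$, i.e.\ of $\nabla_{AA'}\sigma_{BC}$ and $\nabla_{AA'}K_{B'C'}$ separately.

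The key algebraic input is that each of these derivatives splits into a totally symmetrised ``twistor'' component and a trace (valence-one) component, the latter being exactly the two candidate Hamiltonian vector fields $t_{AA'}\propto\nabla_{A}{}^{B'}K_{A'B'}$ and $\xi_{AA'}\propto\nabla^{B}{}_{A'}\sigma_{AB}$. Equivalently, as an $SO(4)$-representation $\nabla_a Y_{bc}$ (antisymmetric in $bc$) decomposes into a totally antisymmetric $\Lambda^3$ part, a divergence (covector) part, and a mixed-symmetry hook part; in spinors the two spin-$\tfrac32$ hook components are precisely $\nabla_{A(A'}K_{B'C')}$ and $\nabla_{(A}{}^{A'}\sigma_{BC)}$, while the $\Lambda^3$ part and the divergence are the two independent linear combinations of $t_{AA'}$ and $\xi_{AA'}$.

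I would then use the characterisation recalled above that the KY condition $\nabla_{(a}Y_{b)c}=0$ is equivalent to $\nabla_a Y_{bc}=\nabla_{[a}Y_{bc]}$, i.e.\ to the simultaneous vanishing of the hook part and of the divergence. The vanishing of the two spin-$\tfrac32$ components gives the twistor equations $\nabla_{A(A'}K_{B'C')}=0$ and $\nabla_{(A}{}^{A'}\sigma_{BC)}=0$, which by \eqref{primedKS} and \eqref{unprimedKS} say that $K_{A'B'}$ and $\sigma_{AB}$ are valence-$2$ Killing spinors; hence, by Lemma 2.1 of \cite{DT}, the geometry is conformally K\"ahler with respect to both orientations. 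The vanishing of the divergence forces the surviving covector combination to be zero, which after fixing the numerical constants is the proportionality $t_{AA'}\propto\xi_{AA'}$; since the single KY condition already ties the relative normalisation of $\sigma_{AB}$ and $K_{A'B'}$, this is exactly the statement that the two Hamiltonian vector fields coincide, $t^a=\xi^a$. The converse is obtained by reversing this reasoning: given both Killing spinors with $t^a=\xi^a$, the form $Y_{ab}=\sigma_{AB}\epsilon_{A'B'}+K_{A'B'}\epsilon_{AB}$ has vanishing hook and divergence parts and is therefore KY, and it is non-constant precisely when the common Hamiltonian vector field (equivalently the $\Lambda^3$ part $\nabla_{[a}Y_{bc]}$) is non-zero, which is the genuinely conformal case.

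The representation theory makes the conceptual skeleton rigid, so the real work---and the main place to slip---is the bookkeeping of the $\epsilon$-contractions and the numerical constants needed to prove that the divergence $\nabla^a Y_{ab}$ is genuinely a multiple of $t_{AA'}-\xi_{AA'}$ (up to a convention-dependent sign), with no residual term, so that ``divergence-free'' translates exactly into ``coinciding Hamiltonian vector fields'' and into nothing stronger. I would also keep track of the reality conditions under $\dagger$, to ensure that $\sigma_{AB}$ and $K_{A'B'}$ define honest real conformal K\"ahler structures and that the excluded covariantly-constant case corresponds precisely to the trivial constant KY tensors.
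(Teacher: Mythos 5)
Your proposal is correct and follows essentially the same route as the paper: decompose $Y_{ab}$ into its SD and ASD spinor parts and observe that the KY condition is equivalent to the pair of Killing spinor equations \eqref{KYEq} sharing a single vector field. Your explicit organisation into hook, divergence and $\Lambda^3$ components is just a more spelled-out version of the paper's one-line reduction via the spinor form of $\varepsilon_{abcd}$, and your handling of non-constancy (the parallel case $t^a=\xi^a=0$) matches the intended reading.
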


\begin{proof}
The KY equation $\nabla_{a}Y_{bc}=\nabla_{[a}Y_{bc]}$ can be written as $\nabla_{a}Y_{bc}=-2\varepsilon_{abcd}\xi^{d}$, where $\xi_{a}=\frac{1}{6}\nabla^{b} *_{g}Y_{ab}$ and $\varepsilon_{abcd}$ is the volume form. In spinors, we can decompose into SD and ASD parts as $Y_{ab}=K_{A'B'}\epsilon_{AB}-\sigma_{AB}\epsilon_{A'B'}$ and $*_{g}Y_{ab}=K_{A'B'}\epsilon_{AB}+\sigma_{AB}\epsilon_{A'B'}$, and the KY equation $\nabla_{a}Y_{bc}=-2\varepsilon_{abcd}\xi^{d}$ is equivalent to
\begin{align}
 \nabla_{AA'}\sigma_{BC} = 2\epsilon_{A(B}\xi_{C)A'}, \qquad 
 \nabla_{AA'}K_{B'C'} = 2\epsilon_{A'(B'}\xi_{C')A},
 \label{KYEq}
\end{align}
where we used that $\varepsilon_{abcd}=\epsilon_{AC}\epsilon_{BD}\epsilon_{A'D'}\epsilon_{B'C'} - \epsilon_{AD}\epsilon_{BC}\epsilon_{A'C'}\epsilon_{B'D'}$.
\end{proof}

A symmetric spinor $K_{A'B'}$ can always be decomposed into principal spinors as 
\begin{align}
K_{A'B'}=\alpha_{(A'}\beta_{B')}, \label{principalspinors}
\end{align}
where $\alpha_{A'}$ and $\beta_{A'}$ are linearly independent as long as $K^{A'B'}K_{A'B'}\neq0$. In fact $\beta_{A'}=\i\alpha_{A'}^{\dagger}$ if $K_{A'B'}$ is real. Equations \eqref{principalspinors} and \eqref{primedKS} imply that $\alpha_{A'}$ is {\em shear-free}: 
\begin{align}
\alpha^{A'}\alpha^{B'}\nabla_{AA'}\alpha_{B'}=0 \label{SFR0}
\end{align}
(analogously for $\beta_{A'}$). This equation is invariant under $\alpha_{A'}\to \lambda \alpha_{A'}$, and it is equivalent to integrability of the complex structure $J_{+}$. Eq. \eqref{SFR0} can be rewritten as
\begin{align}
\nabla_{A(A'}\alpha_{B')} = \mathcal{A}_{A(A'}\alpha_{B')}, 
\label{SFR}
\end{align}
for some 1-form $\mathcal{A}_{a}$. Under $\alpha_{A'}\to \lambda \alpha_{A'}$, we have $\mathcal{A}_{a} \to \mathcal{A}_{a} + \lambda^{-1}\nabla_{a}\lambda$. Thus, $\alpha_{A'}$ satisfies a charged twistor equation w.r.t. the covariant derivative $\nabla_{a} - \mathcal{A}_{a}$. This is a connection on a principal bundle over $M$ encoding the gauge symmetry $\alpha_{A'}\to \lambda \alpha_{A'}$, $\beta_{A'}\to \lambda^{-1} \beta_{A'}$. The structure group is $G=\mathbb{C}^{\times}$ in the complexified case, and $G=U(1)$ on Riemannian sections. The connection is not necessarily (A)SD, but it will be so for the geometries considered in this note, see proposition \ref{prop:ASDC} below.

The geometry $(M,g_{ab})$ is called hyper-K\"ahler iff it is K\"ahler for a 2-sphere of complex structures. In 4d, this is equivalent to (anti-)self-duality of the Riemann tensor, $*_{g}R_{abcd}=\pm R_{abcd}$. For concreteness let us choose $*_{g}R_{abcd}=-R_{abcd}$, then this is the same as eq. \eqref{HK0}. Equivalently, there is a parallel primed spinor field, 
\begin{align}
\nabla_{AA'}o_{B'}=0.  \label{parallelspinor}
\end{align}
The complex conjugate $o^{\dagger}_{A'}$ is also parallel. We choose the normalisation $o_{A'}o^{\dagger A'}=1$. Equations \eqref{HK0} or \eqref{parallelspinor} are also equivalent to the existence of a basis of parallel SD 2-forms satisfying the quaternion algebra. We refer to this as a (parallel) hyper-Hermitian structure. We can choose the SD 2-forms to be
\begin{align}
\omega^{+}_{1\, ab} = 2\i o_{(A'}o^{\dagger}_{B')}\epsilon_{AB}, \quad
\omega^{+}_{2\, ab} = \i(o^{\dagger}_{A'}o^{\dagger}_{B'}-o_{A'}o_{B'})\epsilon_{AB}, \quad
\omega^{+}_{3\, ab} = (o_{A'}o_{B'}+o^{\dagger}_{A'}o^{\dagger}_{B'})\epsilon_{AB}.
\label{ASDbasis}
\end{align}

\begin{proposition}\label{prop:ASDC}
Let $(M,g_{ab})$ be hyper-K\"ahler \eqref{HK0}, and assume it is also conformally K\"ahler with the same orientation. Then the $U(1)$ connection defined by \eqref{SFR} is anti-self-dual, $\nabla_{A(A'}\mathcal{A}^{A}_{B')}=0$ \footnote{As the proof shows, this is actually true as long as $\tilde\Psi_{A'B'C'D'}\alpha^{D'}=0$. For Riemann signature this implies only conformally half-flat $\tilde\Psi_{A'B'C'D'}\equiv0$ (rather than \eqref{HK0}), while for other signatures $\tilde\Psi_{A'B'C'D'}$ can be Petrov type N.}.
\end{proposition}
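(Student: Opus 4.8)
The plan is to exploit that the principal spinor $\alpha_{A'}$ obeys a \emph{charged} twistor equation and to extract the self-dual part of the $U(1)$ curvature directly from the integrability condition of that equation. Writing $D_{a}=\nabla_{a}-\mathcal{A}_{a}$ for the charged derivative, equation \eqref{SFR} says precisely that $D_{A(A'}\alpha_{B')}=0$. Hence $D_{AA'}\alpha_{B'}$ is antisymmetric in $A'B'$ and can be written $D_{AA'}\alpha_{B'}=\epsilon_{A'B'}\gamma_{A}$ for some spinor $\gamma_{A}$, or equivalently $D^{A}{}_{B'}\alpha_{C'}=\epsilon_{B'C'}\gamma^{A}$. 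This is the form in which the second-order integrability computation becomes transparent.

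Next I would apply the symmetrized second-order operator $\Box_{A'B'}:=D_{A(A'}D^{A}{}_{B')}$ to $\alpha_{C'}$ and evaluate it in two independent ways. On one side, the spinor Ricci identities for the charged connection give $\Box_{A'B'}\alpha_{C'}=\tilde{\Psi}_{A'B'C'D'}\alpha^{D'}-2\Lambda\,\epsilon_{C'(A'}\alpha_{B')}-\tilde\phi_{A'B'}\alpha_{C'}$, where $\tilde\phi_{A'B'}=\nabla_{A(A'}\mathcal{A}^{A}{}_{B')}$ is exactly the self-dual part of $F_{ab}=2\nabla_{[a}\mathcal{A}_{b]}$ (the only piece of the $U(1)$ curvature that survives the symmetrization over primed indices), and where the trace-free Ricci spinor $\Phi_{ABA'B'}$ does not appear because $\Box_{A'B'}$ acts on a primed spinor. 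On the other side, substituting $D^{A}{}_{B'}\alpha_{C'}=\epsilon_{B'C'}\gamma^{A}$ and differentiating once more yields $\Box_{A'B'}\alpha_{C'}=\epsilon_{C'(A'}\psi_{B')}$ with $\psi_{B'}:=D_{AB'}\gamma^{A}$.

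Equating the two expressions and projecting onto the totally symmetric part in $A'B'C'$ is the decisive step. Every term carrying an $\epsilon$ is annihilated by the symmetrization: the $\Lambda$ contribution $\epsilon_{C'(A'}\alpha_{B')}$ drops out, and so does the entire right-hand side $\epsilon_{C'(A'}\psi_{B')}$. What remains is $\tilde\phi_{(A'B'}\alpha_{C')}=\tilde{\Psi}_{A'B'C'D'}\alpha^{D'}$. For a hyper-K\"ahler space $\tilde{\Psi}_{A'B'C'D'}=0$, so $\tilde\phi_{(A'B'}\alpha_{C')}=0$; expanding $\tilde\phi_{A'B'}$ in a spinor basis $\{\alpha_{A'},\beta_{A'}\}$ (available since $K^{A'B'}K_{A'B'}\neq0$ makes $\alpha,\beta$ independent) shows that the three symmetric monomials $\alpha\alpha\alpha$, $\alpha\alpha\beta$, $\alpha\beta\beta$ are linearly independent, which forces $\tilde\phi_{A'B'}=0$. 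This is the asserted anti-self-duality $\nabla_{A(A'}\mathcal{A}^{A}{}_{B')}=0$, and the derivation also makes the footnote immediate: only $\tilde{\Psi}_{A'B'C'D'}\alpha^{D'}=0$ is used, never full half-flatness. The part I expect to require the most care is the bookkeeping in the second paragraph: fixing signs and normalizations in the curvature decomposition and confirming that $\Box_{A'B'}$ isolates exactly the self-dual $U(1)$ curvature $\tilde\phi_{A'B'}$ with no stray Ricci terms; once that is settled, the symmetric projection and the final linear-algebra conclusion are routine.
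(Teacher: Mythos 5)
Your proposal is correct and follows essentially the same route as the paper: both apply a second covariant derivative to the (charged) twistor equation \eqref{SFR}, symmetrise over primed indices, and use the spinor Ricci identity to reduce the self-dual $U(1)$ curvature to the condition $\tilde\Psi_{A'B'C'D'}\alpha^{D'}=0$. Your version merely makes explicit two points the paper leaves implicit --- the absence of $\Phi_{ABA'B'}$ and $\Lambda$ terms after symmetrisation, and the final linear-independence argument showing $\tilde\phi_{(A'B'}\alpha_{C')}=0$ forces $\tilde\phi_{A'B'}=0$ --- both of which are handled correctly.
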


\begin{proof}
Apply a covariant derivative to \eqref{SFR}, contract over unprimed indices, and symmetrise over primed indices: $\nabla_{A(A'}\nabla^{A}_{B'}\alpha_{C')} = 
(\nabla_{A(A'}\mathcal{A}^{A}_{B'})\alpha_{C')}$. The left hand side is $\Box_{(A'B'}\alpha_{C')}=-\tilde{\Psi}_{A'B'C'}{}^{D'}\alpha_{D'}$, so it vanishes for $\tilde\Psi_{A'B'C'D'}\equiv0$.
\end{proof}

Putting everything together, we see that a conformal K\"ahler structure on a hyper-K\"ahler manifold with the same orientation is equivalent to a charged twistor with respect to an ASD Maxwell field. Other applications of charged twistors include the Teukolsky equations \cite{A18}, and scattering amplitudes on Taub-NUT \cite{Adamo}. Hyper-K\"ahler manifolds with higher-valence Killing spinors have been thoroughly studied in \cite{DunajskiMason}.

\medskip
There is another way of describing the relation between K\"ahler structures and twistor theory \cite{Pontecorvo}, that is also relevant for this work as it provides some physical intuition. To review this construction, we first recall that a geometry $(M,g_{ab})$ satisfying \eqref{HK0} has a twistor space, which we denote by $\mathcal{PT}$. (We refer to \cite{Penrose76, HuggettTod, WardWells} for background on twistor theory relevant to this discussion.) This is a 3d complex manifold, each point of which is a totally null, SD 2-surface (called {\em $\alpha$-surface}) in the complexification $\mathbb{C}M$. In turn, a space-time point $x$ is identified with a rational curve $L_x\cong\CP^1$ in $\mathcal{PT}$, called twistor line. This correspondence is called incidence relation. There is a double fibration $\mathcal{PT}\leftarrow\mathcal{F}\to\mathbb{C}M$, where $\mathcal{F}$ is the projective primed spin bundle, with local coordinates $(x^{a},\pi_{A'})$. The charged twistors of eq. \eqref{SFR} correspond to 2d systems of $\alpha$-surfaces in $\mathbb{C}M$ \footnote{This is because the integrability condition for $\alpha$-surfaces is precisely the shear-free eq. \eqref{SFR}.}. The corresponding subset of $\mathcal{PT}$ is, at least when restricted to rational curves, a  quadric surface: eq. \eqref{SFR} is equivalent to \eqref{primedKS} (assuming Ricci-flatness and Riemannian signature), which in turn is equivalent to asking the function
\begin{align}
 \chi|_{L_x}:=F(x^{a},\pi_{A'}) := K^{A'B'}\pi_{A'}\pi_{B'} \label{quadric0}
\end{align}
to satisfy $\pi^{A'}\nabla_{AA'}F=0$. So we see that the zero set of $\chi$ is a quadric (on rational curves). Alternatively, there is an equivalence \cite{Pontecorvo} between (scalar-flat) K\"ahler metrics and holomorphic sections of the square-root of the anti-canonical bundle of $\mathcal{PT}$ with two zeroes on every twistor line. 

The above correspondence provides some physical insights when understood in complexified flat space-time \cite{A22}. We denote the latter by $\CM$, with a flat holomorphic metric $\eta=\d{t}^2-\d{x}^2-\d{y}^2-\d{z}^2$. The twistor space of $\CM$ is $\PT=\CP^3\backslash\CP^1$, with homogeneous coordinates $Z^{\alpha}=(\omega^{A},\pi_{A'})$. The incidence relation is 
\begin{align}
\omega^{A}=\i x^{AA'}\pi_{A'}, \label{IR}
\end{align}
where $x^{00'}=\frac{(t+z)}{\sqrt{2}}$, $x^{11'}=\frac{(t-z)}{\sqrt{2}}$, $x^{01'}=\frac{(x+\i y)}{\sqrt{2}}$, $x^{10'}=\frac{(x-\i y)}{\sqrt{2}}$.
The twistor line $L_x$ of $x\in\CM$ is a holomorphic $\CP^1\subset\PT$ obtained by fixing $x^{AA'}$ in \eqref{IR}. The $\CP^1$ removed in the definition of $\PT$ can be interpreted as the twistor line of the point corresponding to conformal infinity. A twistor quadric is a surface $\mathbb{Q}=\{Z^{\alpha}\in\PT \,|\, \chi(Z^{\alpha})=0\}$, where $\chi(Z^{\alpha}):=Q_{\alpha\beta}Z^{\alpha}Z^{\beta}$ for some matrix $Q_{\alpha\beta}$, which we assume not to be of the form $A_{\alpha}A_{\beta}$ for some $A_{\alpha}$. To induce a conformal K\"ahler structure in $\CM$ from $\mathbb{Q}$, we use the Penrose transform: for any point $x$ in an open subset $U\subset\CM$, define
\begin{align}
\phi_{A'_1...A'_{2h}}(x) = \frac{1}{2\pi\i}\oint_{\Gamma}\chi^{-(h+1)}\big|_{L_{x}} 
\pi_{A'_1}...\pi_{A'_{2h}}\pi_{B'}\d\pi^{B'},
\label{PenroseTransform}
\end{align}
where $h$ is a non-negative integer, $L_x\cong\CP^1$ is the twistor line of $x$ and $\Gamma$ is a contour in $L_x$ separating the singularities of $\chi^{-1}$. For $h=0$ we set $\Omega_{+}:=\phi$. Letting $\hat{g}_{ab}:=\Omega_{+}^{2}\eta_{ab}$ and $\hat\kappa^{+}_{ab}:=\phi_{A'B'}\epsilon_{AB}$, the pair $(\hat{g}_{ab},\hat\kappa^{+}_{ab})$ is a (complexified) K\"ahler structure in $U$. Physically, $\mathbb{Q}$ defines zero-rest-mass free fields \eqref{PenroseTransform} on space-time for any $h$. 

Conversely: 
\begin{proposition}\label{prop:CF}
Given a conformally flat K\"ahler structure $(\hat{g}_{ab},\hat\kappa^{+}_{ab})$, there exists a twistor quadric $\mathbb{Q}$ from which $(\hat{g}_{ab},\hat\kappa^{+}_{ab})$ locally arises via the above construction. This quadric can be found by focusing on the conformal factor $\Omega_{+}$: if a $\chi$ is found such that $\Omega_{+}$ is given by \eqref{PenroseTransform} with $h=0$, then $\hat\kappa^{+}_{ab}$ must necessarily be given by \eqref{PenroseTransform} with $h=1$. 
Furthermore, the general conformal factor is given by
\begin{equation}
\begin{aligned}
 2\, \Omega_{+}^{-2} ={}& \tfrac{1}{4}a_{AB}a^{AB}(x_ax^a)^2 + 2\i a_{AB}b^{B}_{A'}x^{AA'}x_cx^c - 2a_{AB}c_{A'B'}x^{AA'}x^{BB'} \\ 
& - 2[(b_ab^a)(x_bx^b)-(b_ax^a)^2] + 4\i b_{A}^{B'}c_{A'B'}x^{AA'} + c^{A'B'}c_{A'B'},
\label{FormulaCF}
\end{aligned}
\end{equation}
for some constant spinors $a_{AB},b^{A'}_{B},c^{A'B'}$ (the components of $Q_{\alpha\beta}$).
\end{proposition}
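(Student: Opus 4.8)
The plan is to exploit the correspondence recalled around \eqref{quadric0}, between the primed Killing spinor and a globally defined quadratic function on $\PT$, and then to read off the conformal factor by a direct spinor computation. First I would reduce everything to the Killing spinor: by \eqref{CFSD}, a conformally flat K\"ahler structure on $\CM$ is the same datum as a valence-$2$ primed Killing spinor $K_{A'B'}$ solving \eqref{primedKS}, with $\Omega_+^{-2}=\tfrac12 K_{A'B'}K^{A'B'}$ and $\hat\kappa^+_{ab}=\Omega_+^{3}K_{A'B'}\epsilon_{AB}$. So it suffices to produce $K_{A'B'}$ from a quadric and to compute $\Omega_+^{-2}$.

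Next I would establish the quadric. As noted after \eqref{quadric0}, the function $F(x,\pi)=K^{A'B'}\pi_{A'}\pi_{B'}$ satisfies $\pi^{A'}\nabla_{AA'}F=0$ — on flat space this is equivalent to \eqref{primedKS} — so $F$ is constant along the $\alpha$-surfaces (this is where the shear-free integrability \eqref{SFR} enters) and descends to a holomorphic function on $\PT$, homogeneous of degree $2$ in $Z^{\alpha}$. Any such function is $\chi(Z^{\alpha})=Q_{\alpha\beta}Z^{\alpha}Z^{\beta}$ for a constant $Q_{\alpha\beta}$; writing its spinor blocks as $(a_{AB},b^{A'}_{B},c^{A'B'})$ and restricting to the line $L_x$ through the incidence relation \eqref{IR}, $\omega^{A}=\i x^{AA'}\pi_{A'}$, yields
\begin{align}
K^{A'B'}=c^{A'B'}+2\i\,b^{(A'}_{A}x^{|A|B')}-a_{AB}x^{AA'}x^{BB'}.
\label{KfromQ}
\end{align}
This simultaneously produces the quadric $\mathbb{Q}=\{\chi=0\}$ and exhibits \eqref{KfromQ} as the general polynomial solution of \eqref{primedKS}.

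Formula \eqref{FormulaCF} then follows by substituting \eqref{KfromQ} into $2\Omega_+^{-2}=K_{A'B'}K^{A'B'}$ and reducing each contraction to tensors, using $x^{AA'}x_{AA'}=x_a x^a$ together with the trace/trace-free split of $x^{AA'}x^{B}{}_{A'}$; the six terms of \eqref{FormulaCF} correspond precisely to the pairings $a\!\cdot\!a,\ a\!\cdot\!b,\ a\!\cdot\!c,\ b\!\cdot\!b,\ b\!\cdot\!c,\ c\!\cdot\!c$. I expect this index bookkeeping to be the main labour of the proof, although it is entirely mechanical; no conceptual obstruction arises since the global existence of the degree-$2$ quadric is already forced by the descent of $F$ to $\PT$.

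Finally, for the $h=0$ versus $h=1$ consistency I would use the principal-spinor factorisation \eqref{principalspinors}: on $L_x$ one has $\chi|_{L_x}=(\alpha^{A'}\pi_{A'})(\beta^{B'}\pi_{B'})$ with $K_{A'B'}=\alpha_{(A'}\beta_{B')}$, whence $K_{A'B'}K^{A'B'}=-\tfrac12(\alpha^{C'}\beta_{C'})^{2}$. A residue evaluation of \eqref{PenroseTransform} gives $\phi\propto(\alpha^{C'}\beta_{C'})^{-1}$ for $h=0$ and $\phi_{A'B'}\propto(\alpha^{C'}\beta_{C'})^{-3}K_{A'B'}$ for $h=1$. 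Comparing with $\Omega_+\propto(\alpha^{C'}\beta_{C'})^{-1}$ and $\hat\kappa^+_{ab}=\Omega_+^{3}K_{A'B'}\epsilon_{AB}$ shows that the \emph{same} $\chi$ whose $h=0$ transform reproduces $\Omega_+$ automatically reproduces $\hat\kappa^+$ at $h=1$: the quadric is fixed by matching the scalar $\Omega_+$ through \eqref{FormulaCF}, after which $K_{A'B'}$ is read off from \eqref{KfromQ} and the symplectic form comes for free. This is the precise sense in which one may focus on the conformal factor.
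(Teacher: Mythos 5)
Your proposal is correct and follows essentially the same route as the paper: reduce everything to the Killing spinor $K_{A'B'}$, identify $\chi|_{L_x}$ with $K^{A'B'}\pi_{A'}\pi_{B'}$, evaluate the $h=0$ and $h=1$ Penrose transforms by residues using the principal-spinor factorisation (so that matching $\Omega_{+}$ automatically yields $\hat\kappa^{+}_{ab}$), and obtain \eqref{FormulaCF} by substituting the incidence relation into $Q_{\alpha\beta}Z^{\alpha}Z^{\beta}$. The only (minor) difference is that you justify the global quadratic form of $\chi$ by descent of $F$ to $\PT$ and the classification of degree-2 homogeneous functions, a step the paper asserts rather than argues; this is a harmless and in fact clarifying addition.
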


\begin{proof}
Recall that $\hat\kappa^{+}_{ab}$ can be written as $\hat\kappa^{+}_{ab}=\Omega_{+}^{3}K_{A'B'}\epsilon_{AB}$, where $K_{A'B'}$ is a Killing spinor and $\Omega_{+}$ is given by eq. \eqref{CFSD}. Since $\hat\kappa^{+}_{ab}$ is SD and closed, $\Omega_{+}^{3}K_{A'B'}$ is a spin 1 field, so it can be generated by the Penrose transform for some twistor function. At this point we do not know the form of this twistor function. Now, let $\chi^{-2}|_{L_x}:=4\i(K^{A'B'}\pi_{A'}\pi_{B'})^{-2}$, and replace this in the right hand side of formula \eqref{PenroseTransform} with $h=1$. To compute the integral, decompose $K^{A'B'}$ into principal spinors as in \eqref{principalspinors}, and use proposition 1 in \cite{A23a} (with $r=s=2$). Noting that $\Omega_{+}=(\frac{1}{2}K_{A'B'}K^{A'B'})^{-1/2}=-2\i(\alpha_{A'}\beta^{A'})^{-1}$, the result of the integral is exactly $\Omega_{+}^{3}K_{A'B'}$, which is the field we wanted to recover. Since $\chi|_{L_x}$, as just defined, is the restriction to $L_x$ of the quadratic function $\chi=Q_{\alpha\beta}Z^{\alpha}Z^{\beta}$ for some $Q_{\alpha\beta}$, we see that the 2-form $\hat\kappa^{+}_{ab}$ arises from a twistor quadric $\mathbb{Q}$. The complex structure $(J_{+})^{a}{}_{b}$ is defined by pairs of points in $\mathbb{Q}$, and the metric is $\hat{g}_{ab}=\hat\kappa_{bc}(J_{+})^{c}{}_{a}$.

Replacing $2\i(K^{A'B'}\pi_{A'}\pi_{B'})^{-1}$ in the right hand side of \eqref{PenroseTransform} with $h=0$ and computing the integral, one gets $-2\i(\alpha_{A'}\beta^{A'})^{-1}$, which is exactly $\Omega_{+}$. This proves that if one knows $\Omega_{+}$ and $\hat\kappa^{+}_{ab}$, and finds the $\chi^{-1}$ that generates $\Omega_{+}$, then $\chi^{-2}$ generates $\hat\kappa^{+}_{ab}$. Finally, formula \eqref{FormulaCF} follows after noticing that a generic expression for $K^{A'B'}$ is obtained by evaluating $\chi = Q_{\alpha\beta}Z^{\alpha}Z^{\beta}$ on a twistor line $L_x$ and using the incidence relation \eqref{IR}.
\end{proof}

Finally, there are conformal K\"ahler structures for which the ASD Maxwell field associated to charged twistors \eqref{SFR} via prop. \ref{prop:ASDC} vanishes: these are the {\em elementary states},
\begin{align}
 \chi = (A_{\alpha}Z^{\alpha})(B_{\beta}Z^{\beta}) \label{ES}
\end{align}
for some $A_{\alpha}, B_{\alpha}$. The quadric \eqref{ES} is the union of two planes in $\PT$. The corresponding space-time fields \eqref{PenroseTransform} are singular on the light-cone of a point $q\in\CM$, whose twistor line $L_q$ is the intersection line of the planes; see \cite[Section 6.10]{PR2}. One can impose reality conditions in such a way that this light-cone does not intersect a real Lorentzian section of $\CM$, so the fields \eqref{PenroseTransform} will be regular there (these are sometimes called hopfions or knotted fields). 
Alternatively, $L_{q}$ can be interpreted as corresponding to conformal infinity, and then chosen to be the $\CP^1$ removed in the definition of $\PT$.

\section{Gibbons-Hawking multi-instantons}
\label{Sec:HKO}

Let $(M,g_{ab})$ be hyper-K\"ahler, and assume that it admits a conformal K\"ahler structure with the same orientation. This is the same as asking $(M,g_{ab})$ to admit both a parallel spinor $o_{A'}$ \eqref{parallelspinor} and a primed Killing spinor $K_{A'B'}$ \eqref{primedKS} with $t_{a}\neq0$. From \eqref{primedKS} we deduce that\footnote{We denote contractions with $o^{A'}$ and $o^{\dagger A'}$ by the subscripts $0'$ and $1'$ respectively.} $\nabla_{a}K_{0'0'} = 2o_{A'}o_{B'}t^{B'}_{A}$, $\nabla_{a}K_{0'1'} = 2o_{(A'}o^{\dagger}_{B')}t^{B'}_{A}$, $\nabla_{a}K_{1'1'} = 2o^{\dagger}_{A'}o^{\dagger}_{B'}t^{B'}_{A}$. Using \eqref{ASDbasis}, this is equivalent to 
$\d(\i K_{0'1'}) = t\lrcorner\,\omega^{+}_1$, 
$\d[\frac{\i}{2}(K_{1'1'}-K_{0'0'})]=t\lrcorner\,\omega^{+}_2$,
$\d[\frac{1}{2}(K_{0'0'}+K_{1'1'})]=t\lrcorner\,\omega^{+}_3$. 
Recalling Cartan's formula for the Lie derivative, and using that $\d\omega^{+}_i=0$ and that $t\lrcorner\,\omega^{+}_i$ is exact for all $i=1,2,3$, we get $\pounds_{t}\omega^{+}_i=0$. Since the Ricci-flat condition implies $\nabla_{(a}t_{b)}=0$, we see that $t^{a}$ is a tri-holomorphic Killing vector, thus, the metric must be given by the Gibbons-Hawking ansatz \cite{GibbonsRuback2}. To check this, define
\begin{align}
 V^{-1}:=t_at^a, \quad 
 x_1:=\i K_{0'1'}, \quad 
 x_2:=\tfrac{1}{2}\i(K_{1'1'}-K_{0'0'}), \quad
 x_3:=\tfrac{1}{2}(K_{0'0'}+K_{1'1'}), 
 \label{GHcoordinates}
\end{align}
a coordinate $\tau$ by $t^{a}\partial_{a}=\partial_{\tau}$, and a 1-form $A$ by $t_{a}\d{x}^{a}=V^{-1}(\d\tau+A)$. Then, using (for example) the generic expression for the metric in terms of a hyper-Hermitian structure given in \cite[Appendix A]{A23b}, we get
\begin{align}
 g = V^{-1}(\d\tau+A)^2+V(\d{x}_1^2+\d{x}_2^2+\d{x}_3^2), \label{GH}
\end{align}
which is the Gibbons-Hawking ansatz.
The hyper-K\"ahler condition is $\d{A} = *_{3}\d{V}$, where $*_{3}$ is the Hogde star in flat Euclidean 3-space. The function $V$ satisfies the flat 3d Laplace equation $\Delta_{3}V=0$, and any solution to this equation can be used to construct a hyper-K\"ahler metric via \eqref{GH}. Using the definitions \eqref{GHcoordinates}, the Killing spinor, conformal factor, and conformal K\"ahler form are computed to be, respectively:
\begin{subequations}
\begin{align}
 K_{A'B'} ={}& (x_3-\i x_2)o_{A'}o_{B'} + 2\i x_1 o_{(A'}o^{\dagger}_{B')} + (x_3+\i x_2) o^{\dagger}_{A'}o^{\dagger}_{B'},  \label{KSexplicit} \\
 \Omega_{+} ={}& (x_1^2 + x_2^2 + x_3^2)^{-1/2}=: r^{-1}, \\
 \hat\kappa^{+} ={}& \frac{1}{r^3} (x_1\omega^{+}_1 + x_2\omega^{+}_2 + x_3\omega^{+}_3). 
 \label{CKGH0}
\end{align}
\end{subequations}
The CKY tensor is $Z=x_1\omega^{+}_1 + x_2\omega^{+}_2 + x_3\omega^{+}_3$.

Conversely, let $(M,g_{ab})$ be a hyper-K\"ahler manifold \eqref{HK0}, with a Killing vector $t^{a}$ that preserves the hyper-K\"ahler structure $(\omega^{+}_1, \omega^{+}_2, \omega^{+}_3)$ (i.e. $t^{a}$ is tri-holomorphic). Let $x_{i}$ be the corresponding Hamiltonian scalar fields: $t\lrcorner\,\omega^{+}_{i}=\d{x}_{i}$, and let $r:=(x_1^2 + x_2^2 + x_3^2)^{1/2}$. Then $\hat\kappa^{+}$ defined by \eqref{CKGH0} is a conformal K\"ahler structure for $(M,g_{ab})$. Introducing spherical coordinates by $x_2+\i x_3 = r\sin\theta \, e^{\i\varphi}$, $x_{1}=r\cos\theta$, a short calculation gives
\begin{align}
 \hat\kappa^{+} = \frac{1}{r^2} \left[(\d\tau+A)\wedge\d{r} - Vr^2\sin\theta\, \d\theta\wedge\d\varphi \right]. \label{CKGH}
\end{align}

Requiring \eqref{GH} to be complete, the geometry must be a gravitational multi-instanton \cite{GibbonsHawking} (also called multi-centred instanton): the function $V$ is 
\begin{align}
 V = \sum_{I=0}^{N}V_{I}, \quad 
 V_{0}=\text{const.}, \quad V_{i} \equiv \frac{1}{r_{i}} = [(x_1-a_i)^2+(x_2-b_i)^2+(x_3-c_i)^2]^{-1/2}
 \label{MI}
\end{align}
where $a_{i},b_{i},c_{i}$ are arbitrary constants, $i=1,...,N$, and $N$ is any positive integer\footnote{Instead of the $V_{i}$ in \eqref{MI}, one can also use $V_{i}\equiv 2m_i/r_i$ for $N$ constants $m_1,...,m_N$, but the singularities can be removed only if all the $m_i$'s are equal \cite{GibbonsHawking2}. The inclusion of the $m_i$'s does not affect our discussion.}. 
Defining spherical coordinates $(r_{i},\theta_{i},\varphi_{i})$ centred at $(a_i,b_i,c_i)$, the 1-form $A$ is $A=\sum_{I=0}^{N}A_{I}$, with $A_{0}=0$ and $A_{i}=\cos\theta_{i}\d\varphi_{i}$. If $V_0\neq0$ the geometry is ALF, and if $V_0=0$ (but $N\neq0$) it is ALE. 
The cases $(V_0, N)=(1,1),(0,2),(1,2)$ are, respectively, ASD Taub-NUT, Eguchi-Hanson, and double Taub-NUT.
The cases $V\equiv V_0$ and $V\equiv V_{i}$ (for $i$ fixed) both give flat space, so the instanton metrics \eqref{MI} are  formed by a suitable combination of flat pieces. Page proved \cite{Page79} that the Green's functions for multi-instantons also share this property. In this spirit, and connecting to the twistor discussion of section \ref{Sec:KSTT}, we have:

\begin{proposition}\label{prop:MI}
The conformal K\"ahler structure of the multi-instantons \eqref{MI} can be expressed as a combination of conformal K\"ahler structures on locally flat 4-spaces, each of which is an elementary state \eqref{ES} in the ALE case, and the same plus a dyon in the ALF case. However, the complex structures of different flat pieces are not compatible.
\end{proposition}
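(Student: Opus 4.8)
The plan is to treat each flat piece of \eqref{MI} on its own and to match its conformal K\"ahler structure to a twistor quadric by means of Proposition \ref{prop:CF}. Since $V\equiv V_0$ and $V\equiv V_i$ each give a flat metric, the converse construction \eqref{CKGH0}--\eqref{CKGH}, applied with $V_I$ and $A_I$ in place of $V$ and $A$, produces a conformally flat K\"ahler structure on each piece; Proposition \ref{prop:CF} then yields a quadric $\mathbb{Q}_I=\{Q^{(I)}_{\alpha\beta}Z^\alpha Z^\beta=0\}$ whose algebraic type is to be determined. The sense in which the global structure is a \emph{combination} is read off from \eqref{CKGH}: because $V=\sum_I V_I$ and $A=\sum_I A_I$ enter $\hat\kappa^+$ additively, $\hat\kappa^+$ decomposes into flat-piece forms $\hat\kappa^+_I$, the single $r^{-2}\d\tau\wedge\d{r}$ contribution being assigned to one distinguished piece (the constant one in the ALF case, a distinguished centre in the ALE case).

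The main step is to show that each single-centre piece $V_i=1/r_i$ is an elementary state. I would centre the piece's conformal factor at its own centre $q_i=(a_i,b_i,c_i)$, so that $\Omega_+^{-2}$ is singular precisely there, set up flat complexified coordinates adapted to $g_i$, and bring $\Omega_+^{-2}$ into the polynomial normal form \eqref{FormulaCF} (the quartic dependence arising from the nonlinear relation between the base coordinates $x_i$ and the flat $\mathbb{R}^4$ coordinates). Reading off $a_{AB},b^{A'}_B,c^{A'B'}$ then gives $Q^{(i)}_{\alpha\beta}$, and the claim is that it has rank two, factoring as $A_{(\alpha}B_{\beta)}$; equivalently $\chi_i=(A_\alpha Z^\alpha)(B_\beta Z^\beta)$ is a union of two planes \eqref{ES} whose intersection line is the twistor line $L_{q_i}$ of the centre. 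A more economical variant of the same check is to compute the $U(1)$ connection $\mathcal{A}_a$ of \eqref{SFR} directly for the single-centre flat metric and verify $\mathcal{A}_a=0$, which by the discussion preceding \eqref{ES} is exactly the elementary-state condition.

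For the constant piece $V\equiv V_0$, present only in the ALF case, the same computation should instead give a quadric $Q^{(0)}_{\alpha\beta}$ of full rank, equivalently a non-vanishing connection $\mathcal{A}_a$ in \eqref{SFR}; I would identify this piece with the self-dual dyon on flat space (the source carrying both electric and magnetic charge, matching the NUT parameter and reproducing the Taub-NUT background of \cite{Adamo}). This establishes the stated decomposition: $N$ elementary states in the ALE case, and the same plus a dyon in the ALF case.

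It remains to prove that the complex structures are not compatible. Each $(J_I)^a{}_b$ is fixed by the pairs of points on its quadric $\mathbb{Q}_I$, as in the proof of Proposition \ref{prop:CF}, equivalently by the parallel hyper-Hermitian frame of $g_I$ and the principal (shear-free) spinors of its Killing spinor. Since the elementary states are singular on the light cones of the distinct centres $q_i$, the intersection lines $L_{q_i}$ are pairwise distinct, and so are the associated complex structures; I would make this explicit by exhibiting the principal spinors of two pieces and showing they cannot be aligned. This is precisely the obstruction to writing the global structure as a linear superposition. I expect the main difficulty to be the explicit determination of $Q^{(i)}_{\alpha\beta}$ and the verification of its rank-two factorisation in flat coordinates, together with the bookkeeping of the shared $\d\tau$ term across pieces --- the latter being itself a manifestation of the incompatibility just described.
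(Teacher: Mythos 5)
Your strategy for the decomposition and for identifying each single-centre piece as an elementary state is essentially the paper's: split $\hat\kappa^{+}$ according to $V=\sum_I V_I$, $A=\sum_I A_I$, recognise each piece as a conformally flat K\"ahler structure, and apply Proposition \ref{prop:CF} to the conformal factor $\Omega_{+}=r^{-1}$ written in Cartesian coordinates adapted to each flat metric (the paper uses \eqref{InerCoord}, giving $r=\tfrac14\delta_{ab}x_i^ax_i^b$ and hence the quadric $Z^0Z^1$ for the centres, and $Z^0Z^3-Z^1Z^2$, a dyon, for the constant piece). However, your resolution of the ``shared $\d\tau$'' bookkeeping fails: if the whole $r^{-2}\d\tau\wedge\d r$ term is assigned to one distinguished piece, the remaining pieces are not conformal K\"ahler structures of flat metrics at all --- the 2-form $r^{-2}\left[\cos\theta_i\d\varphi_i\wedge\d r-(r^2/r_i)\sin\theta\,\d\theta\wedge\d\varphi\right]$ is not an eigenform of any Gibbons--Hawking metric built from $V_i$ alone, and the would-be metric $r_i(\cos\theta_i\d\varphi_i)^2+r_i^{-1}(\d x_1^2+\d x_2^2+\d x_3^2)$ is degenerate. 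The paper's fix is to share the fibre coordinate equally, $\tau'=\tau/(N+1)$, so that every piece carries a full $(\d\tau'+A_I)$ and \eqref{k0}--\eqref{ki} are genuine conformal K\"ahler forms of the flat metrics \eqref{g0}--\eqref{gi}. A smaller point: the invariant criterion distinguishing an elementary state from a dyon is the vanishing of the field strength of $\mathcal{A}_a$ (equivalently, $Q_{\alpha\beta}$ having rank two), not the vanishing of $\mathcal{A}_a$ itself, which is gauge-dependent.

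The more serious gap is the incompatibility argument. You infer incompatibility from the fact that the intersection lines $L_{q_i}$ are pairwise distinct, hence the complex structures are pairwise distinct. Distinctness is not incompatibility: compatibility is a relation between a complex structure and a \emph{metric} ($g(J\cdot,J\cdot)=g$), and a single flat metric admits many mutually distinct orthogonal complex structures --- elementary states centred at different points of the \emph{same} flat space give distinct intersection lines and distinct $J$'s, all compatible with that one metric. What must be shown is the cross statement: for $I\neq J$, the 2-form $\hat\kappa^{+}_{J}$ fails to have definite duality under the Hodge star $*_{g_I}$ of the \emph{other} piece's metric. That does not follow from the singular loci being different, and it is exactly what the paper verifies by direct computation ($*_{g_0}\hat\kappa^{+}_{1}\neq\pm\hat\kappa^{+}_{1}$ and $*_{g_1}\hat\kappa^{+}_{2}\neq\pm\hat\kappa^{+}_{2}$, with centres at $(0,0,0)$ and $(0,0,c)$). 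Comparing principal spinors across pieces is also delicate, since the spin frames belong to different metrics; the duality check against $*_{g_I}$ is the clean way to phrase and establish the claim.
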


We interpret this result as saying that near each instanton, the geometry looks like an elementary state (apart from the dyon, which is some sort of ``background'' field), but the fact that the complex structures are not compatible means that the whole configuration is not simply a linear superposition of elementary states\footnote{Physically, if we equip a flat piece with the conformal K\"ahler form of a different flat piece, it will not satisfy the vacuum Maxwell equations.}. Rather, when there is more than one instanton they will ``interact'', giving origin to the curved geometry \eqref{GH}. The interaction could be described by trying to find a single flat space where \eqref{CKGH} is a complex structure. While what the corresponding twistor quadric may look like is not clear to us, the intuition behind this idea comes from the two-centred solution: see the discussion following equation \eqref{2centres} below (section \ref{Sec:OO}).

\begin{proof}[Proof of prop. \ref{prop:MI}]
We write \eqref{CKGH} as a sum $\hat\kappa^{+}=\sum_{I=0}^{N}\hat\kappa^{+}_{I}$, 
where 
\begin{subequations}\label{CKGHparts}
\begin{align}
\hat\kappa^{+}_{0} ={}& \frac{1}{r^2}\left[ \d\tau'\wedge\d{r} - V_0 r^2 \sin\theta \d\theta\wedge\d\varphi \right], \qquad \tau'\equiv\frac{\tau}{N+1}, \label{k0} \\
\hat\kappa^{+}_{i} ={}& \frac{1}{r^2}\left[ (\d\tau'+\cos\theta_{i}\d\varphi_{i})\wedge\d{r} - \frac{r^2}{r_{i}}\sin\theta \d\theta\wedge\d\varphi \right]. \label{ki}
\end{align}
\end{subequations} 
Note that if $V_0=0$ (ALE), we can take $\hat\kappa^{+}_{0}\equiv 0$ since $V_0=0=N$ is not allowed; and if $V_0\neq0$ (ALF) we can set $V_0\equiv1$. Define the tensor fields $g_{I}$, $I=0,...,N$, by
\begin{subequations}
\begin{align}
g_{0} ={}& \d\tau'^2+\d{r}^2+r^2\d\theta^2+r^2\sin^2\theta\d\varphi^2, \label{g0} \\
g_{i} ={}& r_i(\d\tau'+\cos\theta_i\d\varphi_i)^2 
 + \frac{1}{r_i} (\d{r}^2+r^2\d\theta^2+r^2\sin^2\theta\d\varphi^2). \label{gi}
\end{align}
\end{subequations}
Then $g_0$ and $g_{i}$ are locally flat metrics. For fixed $I$, $g_{I}$ is (locally) a Gibbons-Hawking metric \eqref{GH} with $V$ replaced by $V_{I}$, so for fixed $I$, $\hat\kappa^{+}_{I}$ is a conformal K\"ahler structure for $g_{I}$. In other words, we can form pairs $(g_{I},\hat\kappa^{+}_{I})$, each of which is a locally flat metric $g_{I}$ equipped with a conformal K\"ahler structure $\hat\kappa^{+}_{I}$. From section \ref{Sec:KSTT}, each $(g_{I},\hat\kappa^{+}_{I})$ is thus generated by a quadric surface in flat twistor space. 
Using prop. \ref{prop:CF}, to find the quadric corresponding to each $(g_{I},\hat\kappa^{+}_{I})$, we can focus on the conformal factor $\Omega_{+}$. For all $I$ we have $\Omega_{+}=r^{-1}$, but the coordinate $r$ has different meanings in the flat metrics $g_0,g_i$. We need to first find Cartesian coordinates for $g_{I}$, then express $r$ in terms of these coordinates, and then find the quadric that generates $r^{-1}$.

For $(g_0,\hat\kappa^+_0)$, we see from \eqref{g0} that $(r,\theta,\varphi)$ are spherical coordinates in $\mathbb{R}^3$ and $\tau'$ is Euclidean time, so Cartesian coordinates are $(\tau',x^1_0,x^2_0,x^3_0)$ with $x^1_0+\i x^2_0=r\sin\theta e^{\i\varphi}$, $x^3_0=r\cos\theta$, and $r = \left[(x^1_0)^2+(x^2_0)^2+(x^3_0)^2\right]^{1/2}$. Using \eqref{FormulaCF}, a twistor quadric generating $(g_0,\hat\kappa^+_0)$ is $Z^0 Z^3 - Z^1 Z^2$, which corresponds to a dyon (not an elementary state). 

For $(g_i,\hat\kappa^+_i)$, define $(x^0_i, x^1_i, x^2_i, x^3_i)$ by
\begin{align}\label{InerCoord}
 x^0_i+\i x^3_i = 2\sqrt{r_i}\sin(\theta_i/2)e^{\i(\tau' - \varphi_i)/2}, \qquad
 x^1_i+\i x^2_i = 2\sqrt{r_i}\cos(\theta_i/2)e^{\i(\tau' + \varphi_i)/2}.
\end{align}
Then a short calculation shows that $(\d{x}^0_i)^2+(\d{x}^1_i)^2+(\d{x}^2_i)^2+(\d{x}^3_i)^2$ is exactly the right hand side of \eqref{gi}; thus, $(x^0_i, x^1_i, x^2_i, x^3_i)$ are Cartesian coordinates for $g_{i}$. The $N$ arbitrary points $(a_1,b_1,c_1), ..., (a_N,b_N,c_N)$ represent the location of the instantons in Euclidean 3-space. For fixed $i$, we can choose the coordinate system $(x_1,x_2,x_3)$ in \eqref{GH}, \eqref{MI} to be centred at $(a_i,b_i,c_i)$, which amounts to taking $(a_i,b_i,c_i)\equiv(0,0,0)$ for that fixed $i$. This means that $r=r_{i}$, and from \eqref{InerCoord} we find
\begin{align}
 r = r_{i} = \frac{1}{4}\left[ (x^0_i)^2 + (x^1_i)^2 + (x^2_i)^2 + (x^3_i)^2 \right]. \label{ri}
\end{align}
Using \eqref{FormulaCF}, the quadric generating $(g_i,\hat\kappa^{+}_i)$ is the elementary state $Z^0Z^1$.

Finally, it remains to show that the complex structure of a flat piece is not a complex structure for the others. To do this, we recall from section \ref{Sec:KSTT} that the fundamental 2-form $\kappa$ necessarily has definite duality under the Hodge star of the metric. Thus, it suffices to show that, for $I\neq J$, the 2-form $\hat\kappa^{+}_{J}$ is not an eigenform of the Hodge star of $g_{I}$. This is a matter of calculation; one can see that
\begin{align}
*_{g_0}\hat\kappa^{+}_{1} \neq \pm \, \hat\kappa^{+}_{1}, 
\qquad 
*_{g_1}\hat\kappa^{+}_{2} \neq \pm \, \hat\kappa^{+}_{2},
\end{align}
where one can assume $(a_1,b_1,c_1)=(0,0,0)$ and $(a_2,b_2,c_2)=(0,0,c)$ with $c\neq 0$.
\end{proof}

\section{Opposite orientation}
\label{Sec:OO}

We assume now that $(M,g_{ab})$ is hyper-K\"ahler and has a conformal K\"ahler structure with opposite orientation. This means that there is a parallel spinor $o_{A'}$ \eqref{parallelspinor} and an unprimed Killing spinor $\sigma_{AB}$ \eqref{unprimedKS} with $\xi_{a}\neq0$. The integrability conditions for \eqref{unprimedKS} imply that the ASD Weyl spinor must be type D: denoting by $o_{A}, o^{\dagger}_{A}$ the principal spinors, with $o_Ao^{\dagger A}=1$, we have
\begin{align}
\Psi_{ABCD} = 6\Omega_{-}^{3}o_{(A}o_{B}o^{\dagger}_{C}o^{\dagger}_{D)}, \qquad
 \sigma_{AB}=2\i \Omega_{-}^{-1} o_{(A}o^{\dagger}_{B)}.
\label{typeD}
\end{align}
The SD and ASD parts of $\nabla_{a}\xi_{b}$ are respectively $\varphi_{AB}=\frac{1}{2}\nabla_{AA'}\xi^{A'}_{B}$ and $\chi_{A'B'}=\frac{1}{2}\nabla_{AA'}\xi^{A}_{B'}$. A short calculation gives $\varphi_{AB}= -\frac{1}{2}\Omega_{-}^3\sigma_{AB}$, so
\begin{align}
\nabla_{a}\xi_{b} = -\tfrac{1}{2}\Omega_{-}^3\sigma_{AB}\epsilon_{A'B'} + \chi_{A'B'}\epsilon_{AB}. \label{dKV}
\end{align}
Contracting eq. \eqref{unprimedKS} with $\sigma^{BC}$ and eq. \eqref{dKV} with $\xi^{b}$, we find the following identities:
\begin{subequations}\label{Ernst}
\begin{align}
 \sigma_{AB}\xi^{B}_{A'} ={}& \tfrac{1}{2}\nabla_{AA'}\mathcal{E}_{-}^{-2}, 
  \qquad \mathcal{E}_{-}:=-\Omega_{-} \label{sigmachi} \\
 \chi_{A'B'}\xi^{B'}_{A} ={}& -\tfrac{1}{2}\nabla_{AA'}\mathcal{E}_{+}, 
 \qquad \mathcal{E}_{+}:=\Omega_{-}+\xi_b\xi^b. \label{chixi}
\end{align}
\end{subequations}
The notation $\mathcal{E}_{\pm}$ is chosen so as to illustrate that these scalar fields are the familiar Ernst potentials, whose existence follows from the facts that $\xi^a$ is Killing and the space is Ricci-flat. The Killing equation for $\xi_{a}$ leads to $\nabla_{a}\nabla_{b}\xi_{c} = R_{bca}{}^{d}\xi_{d}$. Using \eqref{HK0}, this gives
\begin{align}
\nabla_{AA'}\chi_{B'C'} = 0. \label{dchi}
\end{align}
We can then separate into two cases, according to whether $\chi_{A'B'}$ vanishes or not. The case $\chi_{A'B'}\equiv0$ leads to the following:

\begin{proposition}\label{prop:TaubNUT}
The only hyper-K\"ahler metric with a non-constant Killing-Yano tensor is the half-flat Taub-NUT instanton.
\end{proposition}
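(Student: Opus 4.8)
The plan is to run the hypothesis through Proposition~\ref{prop:KY} and then play the two Killing spinors it produces against one another. By Proposition~\ref{prop:KY}, a non-constant Killing--Yano tensor is equivalent to a conformal K\"ahler structure for \emph{both} orientations with coinciding Hamiltonian vector fields; concretely, \eqref{KYEq} hands us a primed Killing spinor $K_{A'B'}$ as in \eqref{primedKS} and an unprimed Killing spinor $\sigma_{AB}$ as in \eqref{unprimedKS}, sharing a single vector $\xi_a$. Since $(M,g_{ab})$ is hyper-K\"ahler, the primed spinor $K_{A'B'}$ together with the parallel spinor $o_{A'}$ puts us in the setting of Section~\ref{Sec:HKO}: the common $\xi^a$ is tri-holomorphic and the metric is Gibbons--Hawking \eqref{GH}. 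Tri-holomorphicity, $\pounds_{\xi}\omega^{+}_i=0$, is exactly the vanishing of the part $\chi_{A'B'}$ of $\nabla_a\xi_b$ in \eqref{dKV} acting on the $\omega^{+}_i$, so the Killing--Yano hypothesis lands us in the case $\chi_{A'B'}\equiv0$ singled out just before the statement.

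The second step isolates the scalar that controls the metric. With $\chi_{A'B'}\equiv0$, the Ernst identity \eqref{chixi} forces $\nabla_{AA'}\mathcal{E}_{+}=0$, i.e. $\mathcal{E}_{+}=\Omega_{-}+\xi_b\xi^b$ is constant. Because $\xi^a$ is the tri-holomorphic Killing field of \eqref{GH}, its norm is $\xi_b\xi^b=V^{-1}$, so $\Omega_{-}=c-V^{-1}$ for a constant $c$; moreover $\xi^a$ Lie-drags $\Omega_{-}$, so $V$ is a $\xi$-invariant harmonic function on the flat $3$-space of \eqref{GH}. Thus the anti-self-dual conformal factor is completely determined by the Gibbons--Hawking potential, and the problem is reduced to identifying which harmonic $V$ is consistent with the unprimed structure.

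For the final step I would impose the type-D data \eqref{typeD} coming from the integrability of \eqref{unprimedKS}: $\Psi_{ABCD}\propto\Omega_{-}^{3}o_{(A}o_{B}o^{\dagger}_{C}o^{\dagger}_{D)}$, with principal spinors $o_A,o^{\dagger}_A$ that are preserved by $\xi^a$. Writing $\sigma_{AB}=2\i\Omega_{-}^{-1}o_{(A}o^{\dagger}_{B)}$ with $\Omega_{-}=c-V^{-1}$ and substituting into \eqref{unprimedKS} in the explicit Gibbons--Hawking frame of Section~\ref{Sec:HKO} (using $\Omega_{+}=r^{-1}$ and $K_{A'B'}$ of \eqref{KSexplicit}), one matches the type-D Weyl scalar $\propto(c-V^{-1})^3$ against the curvature that \eqref{GH} builds from $V$, obtaining a differential equation for $V$. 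I expect this to be the main obstacle, and the cleanest route through it to be an alignment argument: the self-dual half of the Killing--Yano tensor is the radial field $x_1\omega^{+}_1+x_2\omega^{+}_2+x_3\omega^{+}_3$, so demanding that the \emph{same} $\xi_a$ generate the anti-self-dual type-D half forces the repeated principal directions to be radial, hence spherical symmetry of $V$. Combined with $\Delta_3 V=0$ this gives $V=\alpha+\beta/r$, which by \eqref{MI} is the half-flat Taub--NUT instanton; the flat degenerations $\alpha=0$ or $\beta=0$ arise as boundary cases where $\Omega_-^{-1}\to0$ and the anti-self-dual (type-D) structure collapses, and should be treated separately.
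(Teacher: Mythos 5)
Your first two steps reproduce the paper's reduction faithfully: Proposition~\ref{prop:KY} yields the two Killing spinors with a common Hamiltonian vector field $\xi^a$, Section~\ref{Sec:HKO} makes $\xi^a$ tri-holomorphic (equivalently $\chi_{A'B'}\equiv 0$, as you correctly note), and \eqref{chixi} then forces $\mathcal{E}_{+}=\Omega_{-}+\xi_b\xi^b$ to be a nonzero constant, so that $\Omega_{-}=c-V^{-1}$ on a Gibbons--Hawking background. Up to this point you are aligned with the paper's own proof.

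The gap is in your final step, which is where all the remaining work lies. The paper does \emph{not} stay in the Gibbons--Hawking picture: it switches to Tod's $SU(\infty)$ Toda formulation \eqref{gToda}--\eqref{TodaEq} adapted to the ASD K\"ahler structure, uses $\d\xi=-\hat\kappa^{-}$ and $\d^2A=0$ to separate $e^{u}$, reduces the Toda equation to $R_{\Sigma}=2$, and invokes the uniformization theorem to land on the round sphere and hence \eqref{TaubNUT}. Your proposed substitute --- that the repeated principal directions of the ASD Weyl tensor must be ``radial'', hence $V$ is spherically symmetric --- is not an argument as stated: the repeated principal spinors $o_A,o^{\dagger}_A$ of \eqref{typeD} are \emph{unprimed} and do not canonically single out a direction in the $\mathbb{R}^3$ base of \eqref{GH} (the radial field $x_1\omega^{+}_1+x_2\omega^{+}_2+x_3\omega^{+}_3$ lives entirely on the primed side), and the inference ``hence spherical symmetry of $V$'' is precisely the statement that needs proving. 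A completion within your framework does exist --- contracting \eqref{sigmachi} with itself gives $|\nabla\Omega_{-}^{-1}|^2=\xi_a\xi^a$, which with $\Omega_{-}=c-V^{-1}$ becomes the first-order constraint $|\nabla_3 V|=(cV-1)^2$ on flat $\mathbb{R}^3$; combined with $\Delta_3V=0$ this forces the level sets of $V$ to be round spheres and $V=V_0+m/r$ --- but none of that derivation appears in your proposal, and without it (or the paper's Toda/uniformization route) the identification with Taub--NUT is unproven.
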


\begin{proof}
Suppose that $(M,g_{ab})$ is hyper-K\"ahler and has a non-constant KY tensor $Y_{ab}$. From prop. \ref{prop:KY}, the KY equation is equivalent to \eqref{KYEq}. Thus there is a primed Killing spinor $K_{A'B'}$, so we know from section \ref{Sec:HKO} that its Hamiltonian vector field $\xi_{a}$ is ASD. But there is also an unprimed Killing spinor $\sigma_{AB}$ with the same $\xi_{a}$ (eq. \eqref{KYEq}), which then satisfies \eqref{dKV} and, since it is ASD, we must have $\chi_{A'B'}\equiv0$. So it remains to see that the geometry must be the half-flat (ASD in our conventions) Taub-NUT solution. 

To find the metric, we can use the formulation given by Tod in \cite{Tod2020} (we follow the details of \cite{A23b}): defining $z:=\Omega_{-}^{-1}$, $W:=(\xi_a\xi^a)^{-1}$, $\xi^{a}\partial_{a}=\partial_{\psi}$, there exist coordinates $x,y$ and a function $u(x,y,z)$ such that the metric $g$ and the K\"ahler form $\hat\kappa^{-}$ are
\begin{align}
 g ={}& W^{-1}(\d\psi+A)^2+W[\d{z}^2+e^{u}(\d{x}^2+\d{y}^2)], \label{gToda} \\
\hat\kappa^{-}={}& (\d\psi+A)\wedge\frac{\d{z}}{z^2} + \frac{We^u}{z^2}\d{x}\wedge\d{y}, 
\label{ckToda}
\end{align}
where the 1-form $A$ is defined by $\xi_{a}\d{x}^a=W^{-1}(\d\psi+A)$. 
The Ricci-flat condition implies that $u$ satisfies the $SU(\infty)$ Toda equation
\begin{align}
 u_{xx}+u_{yy}+(e^{u})_{zz} = 0. \label{TodaEq}
\end{align}

From \eqref{chixi} with $\chi_{A'B'}=0$, we see that the Ernst potential $\mathcal{E}_{+}$ must be constant, and this constant cannot vanish since $\Omega_{-}>0$, $\xi_a\xi^a >0$. For  convenience we denote 
\begin{align*}
 \mathcal{E}_{+} \equiv (2n)^{-2/3}.
\end{align*}
Recalling the expression \eqref{chixi} for $\mathcal{E}_{+}$, solving for $W=(\xi_a\xi^a)^{-1}$, and introducing a new coordinate $\rho$ by $z=\frac{1}{(2n)^{1/3}}(\rho+n)$, we find $W=(2n)^{2/3}\frac{(\rho+n)}{(\rho - n)}$.
Now, using that $\hat\kappa^{-}_{ab}=\Omega_{-}^{3}\sigma_{AB}\epsilon_{A'B'}$, equation \eqref{dKV} with $\chi_{A'B'}=0$ can be written as $\d\xi=-\hat\kappa^{-}$. Comparing this to \eqref{ckToda}, we find $\d{A}=-\frac{W^2e^u}{z^2}\d{x}\wedge\d{y}$. 
The equation $\d^2A=0$ leads to $\frac{W^2e^u}{z^2} = (2n)^{2/3}h(x,y)$ for some function $h(x,y)$, where we include the factor $(2n)^{2/3}$ for convenience. This gives $e^{u} = (2n)^{-2/3} (\rho-n)^2 h(x,y)$. Replacing in the Toda equation \eqref{TodaEq}, we arrive at 
$R_{\Sigma} := - h^{-1}(\partial_x^2+\partial_y^2)\log h = 2$. The notation $R_{\Sigma}$ is meant to illustrate that $R_{\Sigma}$ can be thought of as the scalar curvature of the 2-metric $g_{\Sigma}:=h(x,y)(\d{x}^2+\d{y}^2)$ in \eqref{gToda}. By the uniformization theorem, $R_{\Sigma}=2$ implies that $g_{\Sigma}$ must be the metric of a round 2-sphere: $h(x,y)(\d{x}^2+\d{y}^2)=\d\theta^2+\sin^2\theta\d\varphi^2$. So we can set $h=\sin^2\theta$, $\d{x}=\frac{\d\theta}{\sin\theta}$, $\d{y}=\d\varphi$. Using the equation for $\d{A}$ we found before, we deduce $A=(2n)^{4/3}\cos\theta\d\varphi$. Finally, defining $\tau:=\psi/(2n)^{4/3}$ and replacing everything in \eqref{gToda}, the metric is
\begin{align}
 g = 4n^2\left( \frac{\rho-n}{\rho+n} \right)(\d\tau+\cos\theta\d\varphi)^2 
 + \left( \frac{\rho+n}{\rho-n} \right)\d\rho^2 + (\rho^2-n^2)(\d\theta^2+\sin^2\theta\d\varphi^2),
 \label{TaubNUT}
\end{align}
which we recognise as the ASD Taub-NUT instanton, with NUT charge $n$. 
\end{proof}

We now assume that $\chi_{A'B'}\neq0$. This means that the Killing vector $\xi^{a}$ is not ASD. The interesting structures arising in this case are summarised in the following:

\begin{proposition}\label{prop:KT}
Suppose that the Killing vector $\xi^{a}$ is not ASD. Define a symmetric tensor $H_{ab}$ and a vector field $t^{a}$ by
\begin{align}
H_{ab}:=\sigma_{AB}\chi_{A'B'} - \tfrac{1}{2}\mathcal{E}_{+} g_{ab}, 
\qquad t_{a}:=H_{ab}\xi^{b}. \label{KTdef}
\end{align}
Then:
\begin{align}
 \nabla_{(a}H_{bc)} = 0, \qquad \nabla_{(a}t_{b)} = 0, \qquad \nabla_{AA'}t^{A}_{B'}=0, 
 \qquad [t,\xi] = 0. \label{KT}
\end{align}
\end{proposition}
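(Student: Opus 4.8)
The plan is to verify the four identities \eqref{KT} in the order commutator, Killing tensor, Killing vector, tri-holomorphy, building each on the previous. The structural inputs I would lean on are that $\chi_{A'B'}$ is \emph{parallel} by \eqref{dchi}, that $H^{a}{}_{a}=-2\mathcal{E}_{+}$ (since $\sigma_{AB}\chi_{A'B'}$ is trace-free), and that $\xi^{a}$, being Killing in a Ricci-flat space, Lie-drags the Ernst potentials, $\pounds_{\xi}\mathcal{E}_{\pm}=0$, hence also $\pounds_{\xi}\Omega_{-}=0$. First I would record that $\pounds_{\xi}H_{ab}=0$: in $H_{ab}=\sigma_{AB}\chi_{A'B'}-\tfrac12\mathcal{E}_{+}g_{ab}$ the metric term is preserved because $\xi$ is Killing and $\pounds_{\xi}\mathcal{E}_{+}=0$, while $\sigma_{AB}\chi_{A'B'}$ is, up to contraction with $g$, the product of the anti-self-dual and self-dual parts of the 2-form $\nabla_{a}\xi_{b}$ in \eqref{dKV}; since $\pounds_{\xi}(\nabla\xi)=0$ for a Killing field and $\pounds_{\xi}$ commutes with $*_{g}$, both duality parts, hence $\chi_{A'B'}$ and $\Omega_{-}^{3}\sigma_{AB}$, are preserved separately. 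With $\pounds_{\xi}H=0$ and $\pounds_{\xi}\xi=0$ the commutator identity is then immediate: $\pounds_{\xi}t_{a}=(\pounds_{\xi}H_{ab})\xi^{b}=0$, i.e. $[t,\xi]=0$.

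For the Killing-tensor property $\nabla_{(a}H_{bc)}=0$ I would compute $\nabla_{a}H_{bc}$ from \eqref{unprimedKS}, the parallel condition on $\chi$, and \eqref{chixi}, obtaining $\nabla_{AA'}H_{BB'CC'}=2\epsilon_{A(B}\xi_{C)A'}\chi_{B'C'}+\chi_{A'D'}\xi_{A}{}^{D'}\epsilon_{BC}\epsilon_{B'C'}$. A symmetric rank-3 tensor splits into a trace-free (symmetric-spinor) part and a trace part $g_{(ab}V_{c)}$. The trace-free part is read off by contracting with $\pi^{A}\pi^{B}\pi^{C}\rho^{A'}\rho^{B'}\rho^{C'}$; every term above carries an $\epsilon$ among the three symmetrised unprimed slots ($\epsilon_{A(B}$ in the first, $\epsilon_{BC}$ in the second), so the contraction vanishes and the trace-free part is zero. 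For the trace part one uses $V_{c}=\tfrac13(\nabla^{b}H_{bc}+\tfrac12\nabla_{c}H^{a}{}_{a})$; computing the divergence, again with $\chi$ parallel and \eqref{chixi}, gives $\nabla^{b}H_{bc}=\nabla_{c}\mathcal{E}_{+}=-\tfrac12\nabla_{c}H^{a}{}_{a}$, so $V_{c}=0$. In other words, the term $-\tfrac12\mathcal{E}_{+}g_{ab}$ is tuned, via \eqref{chixi}, precisely to cancel the divergence and promote the conformal Killing tensor to a genuine one.

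That $t^{a}$ is Killing then follows from a general identity. For $H$ a Killing tensor, $\xi$ Killing and $\pounds_{\xi}H=0$, one has $\nabla_{(a}t_{b)}=2H_{c(a}\nabla_{b)}\xi^{c}$: indeed $\nabla_{(a}H_{b)c}=-\tfrac12\nabla_{c}H_{ab}$ by the Killing-tensor equation, and $\xi^{c}\nabla_{c}H_{ab}=-2H_{c(a}\nabla_{b)}\xi^{c}$ by $\pounds_{\xi}H=0$. It therefore suffices to show $H_{ca}\nabla_{b}\xi^{c}$ is antisymmetric in $ab$. Using \eqref{dKV} for $\nabla\xi$ and the identity $\phi_{AC}\phi_{B}{}^{C}\propto\epsilon_{AB}$ valid for a symmetric spinor, both $N_{ca}\nabla_{b}\xi^{c}$ (with $N_{ab}:=\sigma_{AB}\chi_{A'B'}$) and $g_{ca}\nabla_{b}\xi^{c}=\nabla_{b}\xi_{a}$ reduce to linear combinations of the 2-forms $\sigma_{AB}\epsilon_{A'B'}$ and $\chi_{A'B'}\epsilon_{AB}$, which are antisymmetric in $ab$; hence the symmetric part vanishes and $\nabla_{(a}t_{b)}=0$.

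The remaining identity $\nabla_{AA'}t^{A}{}_{B'}=0$ — equivalently, vanishing of the self-dual part $\phi^{t}_{A'B'}$ of $\nabla_{a}t_{b}$, i.e. tri-holomorphy — is the main obstacle. The useful first step is that, $t$ now being Killing, the argument that produced \eqref{dchi} for $\xi$ applies verbatim to $t$: the Killing identity $\nabla_{a}\nabla_{b}t_{c}=R_{bca}{}^{d}t_{d}$ together with $\tilde{\Psi}_{A'B'C'D'}=0$ gives $\nabla_{AA'}\phi^{t}_{B'C'}=0$, so $\phi^{t}_{A'B'}$ is parallel and hence a \emph{constant} combination of the hyper-K\"ahler 2-forms \eqref{ASDbasis}. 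It then remains to show those constants vanish. I would do this by computing $\phi^{t}$ directly from $t_{a}=\sigma_{AB}\chi_{A'B'}\xi^{BB'}-\tfrac12\mathcal{E}_{+}\xi_{a}$, using \eqref{sigmachi} to write $\sigma_{AB}\xi^{BB'}=\tfrac12\nabla_{A}{}^{B'}\Omega_{-}^{-2}$, so that $t_{a}=\tfrac12\nabla_{A}{}^{B'}(\Omega_{-}^{-2}\chi_{A'B'})-\tfrac12\mathcal{E}_{+}\xi_{a}$, and then extracting the self-dual part of $\nabla_{a}t_{b}$ with \eqref{unprimedKS}, \eqref{dKV}, \eqref{typeD} and the Ernst relations \eqref{Ernst}. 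The hard part will be organising this second-derivative calculation; since $\phi^{t}$ is parallel it need only be checked on the constant frame $o_{A'},o^{\dagger}_{A'}$, and the non-constant contributions (carrying gradients of $\mathcal{E}_{+}$ and $\Omega_{-}$) must cancel among themselves, leaving zero.
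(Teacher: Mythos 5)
Your treatment of the first, second and fourth identities in \eqref{KT} is sound and close in substance to the paper's own route: the Killing-tensor property is obtained by computing $\nabla_{a}H_{bc}$ from \eqref{unprimedKS}, \eqref{dchi} and \eqref{chixi} and checking that both the totally symmetric trace-free part and the trace part vanish (your remark that the $-\tfrac{1}{2}\mathcal{E}_{+}g_{ab}$ term is tuned by \eqref{chixi} precisely to kill the divergence is exactly the mechanism); the Killing-vector and commutator properties follow from $\pounds_{\xi}H_{ab}=0$ together with the vanishing of $H_{c(a}\nabla_{b)}\xi^{c}$, which, once $\pounds_{\xi}H=0$ is known, is equivalent to the paper's intermediate step $\xi^{c}\nabla_{c}H_{ab}=0$. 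Your spinor argument that $H_{ca}\nabla_{b}\xi^{c}$ is antisymmetric (each term carries an $\epsilon_{AB}$ or an $\epsilon_{A'B'}$ after using $\phi_{A}{}^{C}\phi_{BC}\propto\epsilon_{AB}$) is a clean way to see this.

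The gap is in the tri-holomorphy statement $\nabla_{AA'}t^{A}{}_{B'}=0$, which you correctly single out as the hard part but do not actually establish. Your observation that the self-dual part $\phi^{t}_{A'B'}$ of $\nabla_{a}t_{b}$ is parallel (because $t^{a}$ is Killing and $\tilde{\Psi}_{A'B'C'D'}=0$) is correct but cannot close the argument: parallel self-dual 2-forms on a hyper-K\"ahler 4-manifold form a three-dimensional space, spanned by \eqref{ASDbasis}, so ``parallel'' does not imply ``zero'', and your final sentence simply asserts that the second-derivative computation ``leaves zero''. The missing ingredient is the identity
\begin{align*}
\nabla_{AA'}t^{A}{}_{B'} = \left[ \tfrac{1}{4}\Box\Omega_{-}^{-2} - \Omega_{-} - \tfrac{3}{2}\xi_{c}\xi^{c} \right]\chi_{A'B'},
\end{align*}
combined with the wave equation $\Box\Omega_{-}^{-2} = 4\Omega_{-} + 6\xi_{a}\xi^{a}$, which is obtained by applying $\nabla^{AA'}$ to \eqref{sigmachi} and using $\nabla^{AA'}\sigma_{AB}=-3\xi^{A'}_{B}$ and $\nabla^{AA'}\xi_{A'}{}^{B}=\Omega_{-}^{3}\sigma^{AB}$. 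It is this cancellation, not the parallelism, that forces the scalar coefficient to vanish; until you exhibit it (or evaluate the would-be constant by some other means), the third identity in \eqref{KT} remains unproven.
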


\begin{proof}
To show that $H_{ab} $ is a Killing tensor, use equations \eqref{unprimedKS}, \eqref{dchi} and \eqref{chixi} to compute $\nabla_{a}H_{bc}$, then $\nabla_{(a}H_{bc)}=0$ follows easily. To show that $t_{a}$ is a Killing vector, we first show that $\xi^{c}\nabla_{c}H_{ab}=0$ and $\pounds_{\xi}H_{ab}=0$. The latter equation also implies that $t^a$ and $\xi^a$ commute. Finally, the part that requires more computation is the ASD property of $t_{a}$. To show this, we first derive the identity
\begin{align*}
\nabla_{AA'}t^{A}_{B'} = \left[ \tfrac{1}{4}\Box\Omega_{-}^{-2} - \Omega_{-} - \tfrac{3}{2}\xi_c\xi^c \right] \chi_{A'B'},
\end{align*}
which follows from the definitions \eqref{KTdef}. Applying $\nabla^{AA'}$ to \eqref{sigmachi} and using $\nabla^{AA'}\sigma_{AB}=-3\xi^{A'}_{B}$ and $\nabla^{AA'}\xi_{A'}^{B}=\Omega_{-}^3\sigma^{AB}$, we get $\Box\Omega_{-}^{-2} = 4\Omega_{-} +6\xi_{a}\xi^{a}$, so $\nabla_{AA'}t^{A}_{B'}=0$ follows.
\end{proof}

We then see that there is a Killing tensor and a second Killing vector $t_{a}$ commuting with $\xi_{a}$. In addition, $t_{a}$ is ASD (so it is not proportional to $\xi_{a}$), which implies that the metric must be Gibbons-Hawking, with $t^{a}$ the tri-holomorphic Killing field. The space is then again conformally K\"ahler w.r.t. both orientations. The second Killing vector arises via a Hughston-Sommers-type construction \cite{Hughston}, even though there are no (non-constant) KY tensors. The metric is \eqref{GH} with $V=(t_{a}t^{a})^{-1}$ and $t^{a}$ defined in \eqref{KTdef}.

In \cite{PH}, Haslehurst and Penrose argue that the most general type D, half-flat complex space is the ASD (in our conventions) limit of the Pleba\'nski-Demia\'nski solution \cite{PD}. Since ASD vacuum type D implies that there is an unprimed Killing spinor, this is the same as our assumptions at the beginning of this section, so our $(M,g_{ab})$ must be the solution in \cite{PH}. Furthermore, \cite{Woodhouse} argues that the solution is a two-centred, ALE Gibbons-Hawking multi-instanton (see also \cite{Casteill}), with $V$ given by 
\begin{align}
 V = \frac{m_1}{r_1} + \frac{m_2}{r_2} 
 \label{2centres}
\end{align}
for some constants $m_1,m_2$ (and $r_1,r_2$ are defined in \eqref{MI}). The SD conformal K\"ahler form $\hat\kappa^{+}$ is \eqref{CKGH}, and from proposition \ref{prop:MI} it is described, near each of the two instantons, by an elementary state. But we know that when both of the instantons are taken into account, the twistor quadric must be different. (Intuitively, each instanton will ``feel'' the field of the other.) Now, when there are only two instantons, we can find the solution to this issue, since the situation can be described in terms of the relative distance of the two centres. We can take $(a_1,b_1,c_1)=(0,0,0)$ and $(a_2,b_2,c_2)=(0,0,c_2)$, then $r_1=r$ and 
\begin{align}
 r = \sqrt{ r_2^2 + 2c_2 r_2\cos\theta_2 + c_2^2}. \label{r2}
\end{align}
The Cartesian coordinates of instanton 2, $(x_2^0,x_2^1,x_2^2,x_2^3)$, are given by \eqref{InerCoord} with $i=2$, from where we deduce
\begin{align*}
 r_2=\frac{1}{4}\left[ (x^0_2)^2 + (x^1_2)^2 + (x^2_2)^2 + (x^3_2)^2 \right], \quad
 r_2\cos\theta_2=\frac{1}{4}\left[ (x^1_2)^2 + (x^2_2)^2 - (x^0_2)^2 - (x^3_2)^2 \right].
\end{align*}
Replacing in \eqref{r2}, and defining $c_2\equiv c^2/4$ for convenience, we get
\begin{align}
 r = \frac{1}{4}\sqrt{\left[\delta_{ab} x_2^a x_2^b - c^2\right]^2 + 4c^2\left[(x^1_2)^2 + (x^2_2)^2\right]}.
 \label{rPD}
\end{align}
Recalling that $r=\Omega_{+}^{-1}$ and using \eqref{FormulaCF}, we find that the twistor quadric generating \eqref{rPD} is $Z^0Z^1 + \frac{c^2}{2} Z^2Z^3$. This is not an elementary state. But the conformal K\"ahler form defined by this quadric is our original $\hat\kappa^{+}$ (i.e. \eqref{CKGH} with \eqref{2centres}), since the quadric generates the conformal factor (recall prop. \ref{prop:CF}). In other words: this argument gives us the flat-space twistor quadric associated to the two instantons. Physically, the different structure of the singularity can be seen by Wick-rotating\footnote{In our conventions, a Wick rotation corresponds to $x_2^0=t$, $x_2^1=\i x$, $x_2^2=\i y$, $x_2^3= \i z$.} to Minkowski space-time, where $r=0$ in \eqref{rPD} describes two objects undergoing hyperbolic motion (instead of the hopfion-like field of an elementary state), see \cite{A23a}. This gives us the ``interaction'' of the two instantons, and is precisely the singular structure of the Pleba\'nski-Demia\'nski space-time \cite{PD}.

A possible objection to the above picture, is that the same solution \eqref{rPD} is obtained if one considers only {\em one} instanton, say $(a_1,b_1,c_1)$, but chooses $(a_1,b_1,c_1)=(0,0,c_1)$ instead of all zero. From prop. \ref{prop:MI} one expects a single instanton to correspond to an elementary state, and yet we just argued that \eqref{rPD} describes a Pleba\'nski-Demia\'nski field. But the point is that if $(a_1,b_1,c_1)=(0,0,c_1)$, one is describing the field relative to a reference system that is not located at the instanton.
Intuitively, the situation may be compared to electromagnetism, where an observer situated at a point-like charge sees only an electric field, whereas an observer in relative motion will also see a magnetic field. 
In our case, a reference system located at the instanton will see an elementary state, but a system away from it will see the field \eqref{rPD}. When there is a single instanton one can choose where to put the coordinate system, but when there are two, from either location one will see the field of the other.

\section{Final remarks}

We saw that a hyper-K\"ahler instanton with an equally or oppositely oriented conformal K\"ahler structure must be given by the Gibbons-Hawking ansatz. This leaves out some geometries, notably the Atiyah-Hitchin metric. We decomposed the conformal K\"ahler structure into a number of locally flat pieces, where each piece can be described by a twistor elementary state (plus a background dyon in the ALF case), but the different complex structures are not compatible, reflecting the fact that the geometry is not a linear superposition and the instantons `interact'. 

A possible description in terms of a single flat space with a single complex structure is motivated by the situation in (non-self-dual) black hole solutions. As a simple example, for the Schwarzschild solution in standard Schwarzschild coordinates, the conformal K\"ahler 2-form is independent of mass and thus ``lives'' on flat space. This was used in \cite{A22} to argue that the space-time can be locally described as a deformed twistor quadric, and the same can be done for Kerr and for more general black hole space-times. 

For the multi-instantons studied in the current note, we saw that when there are only two centres, the system can be described in terms of their relative distance in $\mathbb{R}^3$; we used this to find a single flat space compatible with the complex structure, and we saw that the description is consistent with the fact that the solution must be Pleba\'nski-Demia\'nski. When there are more than two instantons this trick cannot be applied as there are more parameters, but a similar description in terms of a single flat space would allow to Wick rotate the linearised instantons to describe their interaction in Minkowski space-time; in particular, in the three-centred Gibbons-Hawking case, one could get a spin 2 field in Minkowski representing the linearised Chen-Teo solution.

\end{document}